\newcommand{\ds}{\displaystyle}
\def\nm{\noalign{\medskip}}
\newtheorem{thm}{Theorem}
\newtheorem{lem}{Lemma}
 \def\p{\partial}
\def \Vh0{\stackrel{\circ}{V}_h} 
  \def\om{\omega}
\def\l{\label}  \def\f{\frac} \def\df{\dfrac} 
   \def\eps{\varepsilon}
\def\l|{\left|}
\def\r|{\right|}
\newcommand{\R}{\mathbb{R}}
\newcommand{\N}{\mathbb{N}}
\newcommand{\Z}{\mathbb{Z}}
\newcommand{\lc}
{\mathrel{\raise2pt\hbox{${\mathop<\limits_{\raise1pt\hbox
{\mbox{$\sim$}}}}$}}}
\newcommand{\gc}
{\mathrel{\raise2pt\hbox{${\mathop>\limits_{\raise1pt\hbox{\mbox{$\sim$}}}}$}}}
\newcommand{\ec}
{\mathrel{\raise2pt\hbox{${\mathop=\limits_{\raise1pt\hbox{\mbox{$\sim$}}}}$}}}
\def\be{\begin{equation}} \def\ee{\end{equation}}
\def\bea{\begin{eqnarray}}  \def\eea{\end{eqnarray}}
\def\beas{\begin{eqnarray*}} \def\eeas{\end{eqnarray*}}
\def\bn{\begin{enumerate}} \def\en{\end{enumerate}}
\def\bd{\begin{description}} \def\ed{\end{description}}
\title{Theory of plasmonic metasurfaces\thanks{\footnotesize This work was supported  by the ERC Advanced Grant Project MULTIMOD--267184. Hai Zhang was supported by a startup fund from HKUST.}}
\author{
Habib Ammari\thanks{\footnotesize Department of Mathematics, 
ETH Z\"urich, R\"amistrasse 101, CH-8092 Z\"urich, Switzerland (habib.ammari@math.ethz.ch,  wei.wu@sam.math.ethz.ch, sanghyeon.yu@math.ethz.ch)} \and Matias Ruiz\thanks{\footnotesize Department of Mathematics and Applications,
Ecole Normale Sup\'erieure, 45 Rue d'Ulm, 75005 Paris, France
(matias.ruiz@ens.fr).} \and 
 Wei Wu\footnotemark[2] \and Sanghyeon Yu\footnotemark[2],
\and  
Hai Zhang\thanks{\footnotesize
Department of Mathematics, 
 HKUST,  Clear Water Bay, Kowloon, Hong Kong (haizhang@ust.hk).}
}
\date{}
\begin{document}

\maketitle


\begin{abstract}
In this paper we derive an impedance boundary condition to approximate the optical scattering effect of an array of plasmonic nanoparticles mounted on a perfectly conducting plate. We show that at some resonant frequencies the impedance blows up, allowing for a significant reduction of the scattering from the plate. Using the spectral properties of a Neumann-Poincar\'e type operator, we investigate the dependency of the impedance with respect to changes in the nanoparticle geometry and configuration. 
\end{abstract}


\bigskip

\noindent {\footnotesize Mathematics Subject Classification
(MSC2000): 35R30, 35C20.}

\noindent {\footnotesize Keywords: plasmonic resonance, Neumann-Poincar\'e operator, array of nanoparticles, periodic Green function, metasurfaces.}


\section{Introduction} \label{sec-intro}

Driven by the search for new materials with interesting and unique optical properties, the field of plasmonic nanoparticles  has grown immensely in the last decade \cite{link}. Recently, there have been several interesting mathematical works on plasmonic resonances for nanoparticles
\cite{pierre, matias, matias2, kang1, hyeonbae, triki, Gri12, kang3}. 
On the other hand, scattering of waves by periodic structures plays a central role in optics \cite{garcia}.   

In this paper we consider the scattering by a layer of periodic plasmonic nanoparticles mounted on a perfectly conducting sheet. We design the layer in order to control and transform waves. Since the thickness of the layer, which is of the same order of the diameter of the individual nanoparticles, is negligible compared to the  
wavelength, it can be approximated by an impedance boundary condition. 
Our main result is to prove that at some resonant frequencies, which are fully characterized in terms of the periodicity, the shape and the material parameters of the nanoparticles, the thin layer has anomalous reflection properties and can be viewed as a metasurface. Since the period of the array is much smaller than the wavelength, the resonant frequencies of the array of nanoparticles differ significantly from those of single nanoparticles. As shown in this paper, they are associated with eigenvalues of a periodic Neumann-Poincar\'e type operator. In contrast with quasi-static plasmonic resonances of single nanoparticles, they depend  on the particle size. For simplicity, only one-dimensional arrays embedded in $\mathbb{R}^2$ are considered in this paper. The extension to the two-dimensional case is straightforward and the dependence of the plasmonic resonances on the parameters of the lattice is easy to derive.   

The array of plasmonic nanoparticles can be used to efficiently reduce the scattering of the perfectly conducting sheet. We present numerical results to illustrate  our main findings in this paper, which open a door for a mathematical
and numerical framework for realizing full control of waves using metasurfaces \cite{alu,pnas,proca}. Our approach applies to any example of periodic distributions of resonators having resonances in the quasi-static regime. It provides a framework for explaining the observed extraordinary or meta properties of such structures and for optimizing these properties.   

The paper is organized as follows. We first use  formulate the problem of approximating the effect of a thin layer with impedance boundary conditions and give useful results on the 1-d periodic Green function. Then we derive the effective impedance boundary conditions and give the shape derivative of the impedance parameter. In doing so,  
we analyze the spectral properties of the 1-d periodic Neumann-Poincar\'e operator defined by (\ref{periodicNP}) and obtain an explicit formula for the equivalent boundary condition in terms of its eigenvalues and eigenvectors. Finally, we illustrate with a few numerical experiments the anomalous change in the equivalent impedance boundary condition due to the plasmonic resonances of the periodic array of nanoparticles. For simplicity, we only consider the scalar wave equation and use a two-dimensional setup. The results of this paper can be readily generalized to higher dimensions as well as to the full Maxwell equations.

\section{Setting of the problem} \label{sect1}
We use the Helmholtz equation to model the propagation of light. This approximation can be viewed as a special case of Maxwell's equations, when the incident wave $u^i$ is transverse magnetic (TM) or transverse electric (TE) polarized.

Consider a particle occupying a bounded domain $D\Subset\mathbb{R}^2$ of class $\mathcal{C}^{1,\alpha}$ for some $0<\alpha<1$ and with size of order $\delta \ll 1$. The particle is characterized by electric permittivity $\varepsilon_c$ and magnetic permeability $\mu_c$, both of which may depend on the frequency of the incident wave. Assume that $\Im m\, \eps_c >0, \Re e\, \mu_c <0, \Im m \, \mu_c >0$ and define
\beas
k_m = \omega \sqrt{\eps_m \mu_m}, \quad k_c = \omega \sqrt{\eps_c \mu_c}, 
\eeas
where $\eps_m$ and $\mu_m$ are the permittivity and permeability of free space respectively and $\om$ is the frequency. Throughout this paper, we assume that $\eps_m$ and $\mu_m$ are real and positive and $k_m$ is of order $1$.

We consider the configuration shown in Figure \ref{figPeriodicNanoparticles}, where a particle $D$ is repeated periodically in the $x_1$-axis with period $\delta$, and is of a distance of order $\delta$ from the boundary $x_2=0$ of the half-space
$\R^2_+ := \{(x_1,x_2)\in \R^2,\; x_2 >0 \}$. We denote by $\mathcal{D}$ this collection of periodically arranged particles and $\Omega :=  \R^2_+ \setminus \overline{\mathcal{D}}$.
\begin{figure}[!h]
\begin{center}
\includegraphics[scale=0.25]{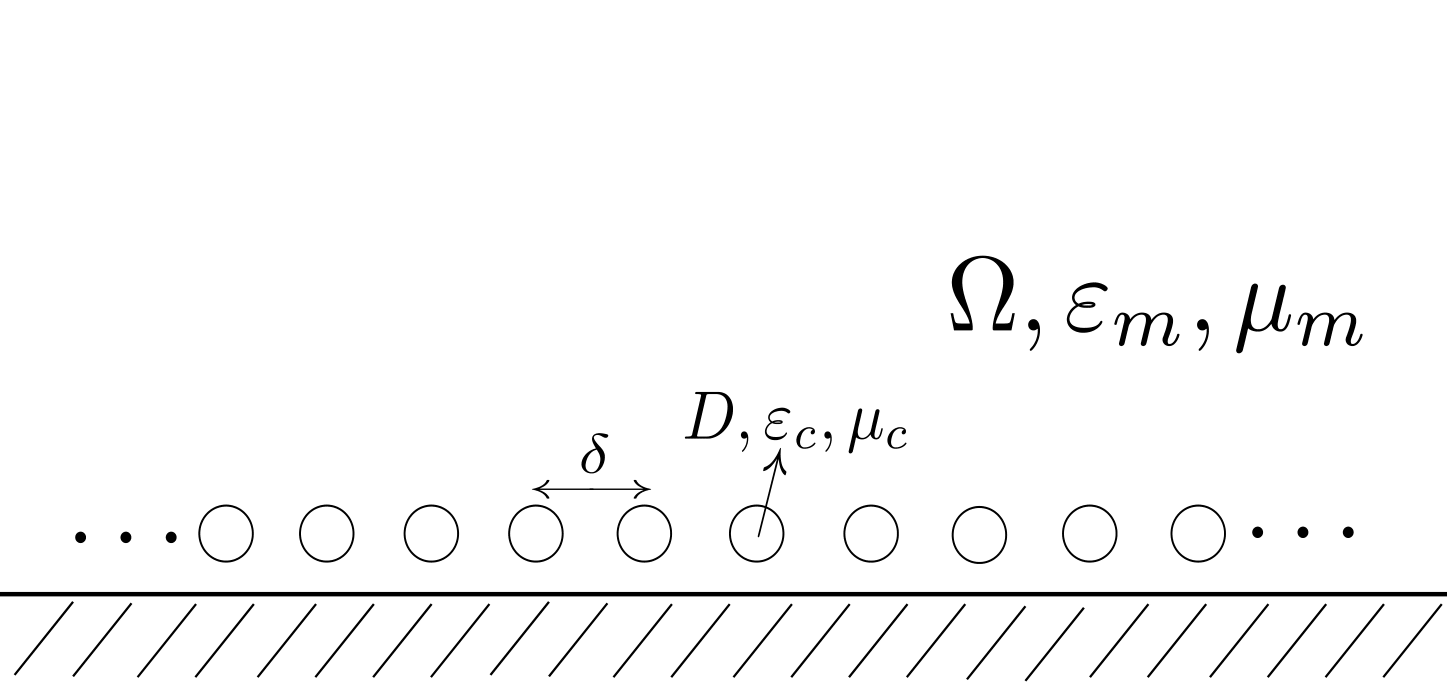}
\caption{ \label{figPeriodicNanoparticles} Thin layer of nanoparticles in the half space.}
\end{center}
\end{figure}

Let $u^i(x)=e^{ik_m d\cdot x}$ be the incident wave. Here, $d$ is the unit incidence direction. The scattering problem is modeled as follows
\begin{equation} \label{eq-problem setting meta}
 \left\{
\begin{array} {ll}
&\ds \nabla \cdot \f{1}{\mu_{\mathcal{D}}} \nabla  u+ \omega^2 \eps_{\mathcal{D}} u  = 0 
\quad \mbox{in } \R^2_+ \setminus \partial \mathcal{D}, \\
\nm
& u_{+} -u_{-}  =0   \quad \mbox{on } \partial \mathcal{D}, \\
\nm
&  \ds \f{1}{\mu_{m}} \f{\p u}{\p \nu} \bigg|_{+} - \f{1}{\mu_{c}} \f{\p u}{\p \nu} \bigg|_{-} =0 \quad \mbox{on } \partial \mathcal{D},\\
\nm
&  u- u^{i}  \,\,\,  \mbox{satisfies an outgoing radiation condition at infinity},\\
\nm
& u = 0 \quad \mbox{on } \partial \R^2_+ = \{(x_1,0),\; x_1\in \R\},
  \end{array}
 \right.
\end{equation}
where
\beas
\eps_{\mathcal{D}}= \eps_m \chi(\Omega) + \eps_c \chi({\mathcal{D}}), \quad
\mu_{\mathcal{D}} = \eps_m \chi(\Omega)+ \eps_c \chi({\mathcal{D}}),
\eeas
and ${\p }/{\p \nu}$ denotes the outward normal derivative on $\partial \mathcal{D}$.

Following \cite{abboud}, under the assumption that the wavelength of the incident wave is much larger than the size of the nanoparticle, a certain homogenization occurs, and we can construct $z \in \mathbb{C}$ such that the solution to
\be
\label{approxz}
 \left\{
\begin{array} {ll}
&\ds \Delta  u_{\mathrm{app}} + k_m^2 u_{\mathrm{app}} = 0 \quad \textnormal{in } \R^2_+, \\
\nm
& u_{\mathrm{app}} + \delta z\f{\p u_{\mathrm{app}}}{\p x_2} = 0 \quad \textnormal{on } \partial \R^2_+, \\
\nm
&  u_{\mathrm{app}}-{u^i}  \,\,\,  \textnormal{satisfies outgoing radiation condition at infinity},
  \end{array}
 \right.
\ee
gives the leading order approximation for  $u$.  We will refer to $u_{\mathrm{app}} + \delta z \f{\p u_{\mathrm{app}}}{\p x_2} = 0$ as the equivalent impedance boundary condition for problem \eqref{eq-problem setting meta}. 


\section{1-d periodic Green function} \label{Gper}

Consider the function $G_{\sharp}: \R^2 \rightarrow \mathbb{C}$ satisfying
\be \label{eq-Lap Green periodic meta}
\Delta G_{\sharp}(x) = \sum_{n \in \Z} \delta(x + (n,0)).
\ee
We call $G_{\sharp}$ the 1-d periodic Green function for $\R^2$. 
\begin{lem} Let $x=(x_1,x_2)$, then
\beas
G_{\sharp}(x) = \f{1}{4\pi}\log\big( \sinh^2(\pi x_2) + \sin^2(\pi x_1) \big),
\eeas
satisfies \eqref{eq-Lap Green periodic meta}.
\end{lem}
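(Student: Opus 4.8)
The plan is to recognise $G_{\sharp}$ as a constant multiple of $\log|\sin(\pi z)|$ with $z=x_1+ix_2$, and then read off everything from elementary complex analysis. First I would record
\[
\sin(\pi z)=\sin(\pi x_1)\cosh(\pi x_2)+i\cos(\pi x_1)\sinh(\pi x_2),
\]
so that, using $\cosh^2=1+\sinh^2$ and $\cos^2=1-\sin^2$,
\[
|\sin(\pi z)|^2=\sin^2(\pi x_1)\cosh^2(\pi x_2)+\cos^2(\pi x_1)\sinh^2(\pi x_2)=\sin^2(\pi x_1)+\sinh^2(\pi x_2).
\]
Hence $G_{\sharp}(x)=\frac{1}{2\pi}\log|\sin(\pi z)|$. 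The $1$-periodicity in $x_1$ and the evenness in $x_2$ are then immediate, and the zero set of the argument of the logarithm is exactly $\Z\times\{0\}$, which coincides with the set of points $-(n,0)$, $n\in\Z$.

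Next I would check that $G_{\sharp}\in L^1_{\mathrm{loc}}(\R^2)$, so that it defines a distribution and $\Delta G_{\sharp}$ is meaningful: near each $(n,0)$ the singularity of $\log|\sin(\pi z)|$ is logarithmic, hence integrable, while as $|x_2|\to\infty$ one has $G_{\sharp}(x)=\frac{|x_2|}{2}+O(1)$, which is locally integrable.

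To compute $\Delta G_{\sharp}$ I would split into the regular and singular parts. On any simply connected open set $U$ disjoint from $\Z\times\{0\}$ one may choose a holomorphic branch of $\log\sin(\pi z)$; then $G_{\sharp}=\frac{1}{2\pi}\Re\log\sin(\pi z)$ is the real part of a holomorphic function, hence harmonic. (Equivalently, writing $\Delta=4\,\partial_z\partial_{\bar z}$ and $4\pi G_{\sharp}=\log\sin(\pi z)+\overline{\log\sin(\pi z)}$ locally, $\partial_{\bar z}$ annihilates the first term and $\partial_z$ the second.) Thus $\Delta G_{\sharp}=0$ on $\R^2\setminus(\Z\times\{0\})$. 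It remains to identify the contribution at each $(n,0)$. From $\sin(\pi z)=(-1)^n\sin(\pi(z-n))=(-1)^n\pi(z-n)\bigl(1+O((z-n)^2)\bigr)$ one gets, near $(n,0)$,
\[
G_{\sharp}(x)=\frac{1}{2\pi}\log|x-(n,0)|+h_n(x),
\]
where $h_n$ is bounded and harmonic on a punctured neighbourhood of $(n,0)$, hence harmonic there (removable singularity). Since $\Delta\bigl(\frac{1}{2\pi}\log|x-(n,0)|\bigr)=\delta_{(n,0)}$ in $\R^2$, and the support of any $\phi\in C_c^\infty(\R^2)$ meets only finitely many of the singular points, subtracting from $G_{\sharp}$ the corresponding finite sum of shifted fundamental solutions leaves a function harmonic on a neighbourhood of $\mathrm{supp}\,\phi$; pairing with $\Delta\phi$ therefore gives $\langle\Delta G_{\sharp},\phi\rangle=\sum_{n\in\Z}\phi(n,0)$. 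Relabelling $n\mapsto -n$ (a bijection of $\Z$) yields $\Delta G_{\sharp}=\sum_{n\in\Z}\delta(\,\cdot\,+(n,0))$, which is \eqref{eq-Lap Green periodic meta}.

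I do not expect a genuine obstacle: the content is the trigonometric–hyperbolic identity that turns the formula into $\log|\sin(\pi z)|$, after which harmonicity away from the lattice and the logarithmic behaviour at the lattice points are standard. The only point needing a little care is the passage from the local representations to the global distributional identity — i.e. checking that, against a fixed test function, only finitely many lattice points contribute and the remainder is genuinely smooth near the support — but this is routine once $G_{\sharp}\in L^1_{\mathrm{loc}}$ is established.
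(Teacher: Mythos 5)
Your proof is correct, and it takes a genuinely different route from the paper's. The paper proceeds constructively: it applies the Poisson summation formula to rewrite the right-hand side of \eqref{eq-Lap Green periodic meta} as $\sum_n \delta(x_2)e^{i2\pi n x_1}$, expands $G_{\sharp}$ in a Fourier series in $x_1$, solves the resulting ODEs for the coefficients $\beta_n(x_2)$, and then collapses the series to the closed form by citing a known summation identity. You instead verify the stated formula directly: the identity $\sinh^2(\pi x_2)+\sin^2(\pi x_1)=|\sin(\pi z)|^2$ identifies $G_{\sharp}$ as $\frac{1}{2\pi}\log|\sin(\pi z)|$, harmonicity off the lattice follows from holomorphy of a local branch of $\log\sin(\pi z)$, and the factorization $\sin(\pi z)=(-1)^n\pi(z-n)(1+O((z-n)^2))$ isolates the fundamental-solution singularity at each lattice point. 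What your approach buys is self-containedness and rigor at the distributional level: you do not need the cited summation identity, and you handle the pairing with test functions carefully (local integrability, finitely many singularities meeting the support), whereas the paper manipulates $\delta$-series and termwise Laplacians formally. What the paper's approach buys is that it \emph{derives} the formula rather than verifying it, and the intermediate Fourier representation $\frac{1}{2}|x_2|+c-\sum_{n\neq 0}\frac{1}{4\pi|n|}e^{-2\pi|n||x_2|}e^{i2\pi nx_1}$ is reused implicitly later (e.g.\ in the asymptotics of Lemma \ref{lem-alpha assymp meta}, where the $-y_2+O(e^{-x_2})$ behaviour comes from exactly this mode decomposition). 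Both arguments are sound; yours is the cleaner verification, the paper's the more informative derivation.
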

\begin{proof} We have
\bea \label{eq-poisson Green periodic meta}
\Delta G_{\sharp}(x) &=& \sum_{n \in \Z} \delta(x + (n,0))\nonumber \\
&=& \sum_{n \in \Z}\delta(x_2) \delta(x_1 + n)\nonumber \\
&=& \sum_{n \in \Z}\delta(x_2) e^{i 2\pi n x_1},
\eea
where we have used the Poisson summation formula $\sum_{n \in \Z}\delta(x_1 + n) = \sum_{n \in \Z} e^{i 2\pi n x_1}$.

On the other hand, since $G_{\sharp}$ is periodic in $x_1$ of period $1$, we have
\beas
G_{\sharp}(x) = \sum_{n \in \Z}\beta_n(x_2)e^{i 2\pi n x_1},
\eeas
therefore
\be \label{eq-fourier Green periodic meta}
\Delta G_{\sharp}(x) = \sum_{n \in \Z}(\beta_n^{''}(x_2) + (i 2\pi n)^2\beta_n)e^{i 2\pi n x_1}.
\ee
Comparing \eqref{eq-poisson Green periodic meta} and \eqref{eq-fourier Green periodic meta} yields
\beas
\beta_n^{''}(x_2) + (i 2\pi n)^2\beta_n = \delta(x_2).
\eeas
A solution to the previous equation can be found by using standard techniques for ordinary differential equations. We have
\beas
\beta_0 &=& \f{1}{2}|x_2| + c, \\
\beta_n &=& \f{-1}{4 \pi |n|}e^{-2 \pi |n||x_2|}, \quad n\neq 0,
\eeas
where $c$ is a constant. Subsequently,
\beas
G_{\sharp}(x) &=& \f{1}{2}|x_2| + c - \sum_{n \in \Z\backslash\{0\}}\f{1}{4 \pi |n|}e^{-2 \pi |n||x_2|}e^{i 2\pi n x_1}\\
&=& \f{1}{2}|x_2| + c - \sum_{n \in \N\backslash\{0\}}\f{1}{2 \pi n}e^{-2 \pi n |x_2|}\cos(2\pi n x_1)\\
&=& \f{1}{4\pi}\log\big( \sinh^2(\pi x_2) + \sin^2(\pi x_1) \big),
\eeas
where we have used the summation identity (see, for instance, \cite[pp. 813-814]{Hale/Lunel})
\beas
\sum_{n \in \N\backslash\{0\}}\f{1}{2 \pi n}e^{-2 \pi n |x_2|}\cos(i 2\pi n x_1) = \f{1}{2}|x_2| - \f{\log(2)}{2\pi} \\ 
- \f{1}{4\pi}\log\big( \sinh^2(\pi x_2) + \sin^2(\pi x_1) \big),
\eeas
and defined $c = -\df{\log(2)}{2\pi}$.
\end{proof}

Throughout, we denote by $H^s(\p B)$ the usual Sobolev space of order $s$ on $\p B$ and by $Id$ the identity operator. Let us also denote by $G_{\sharp}(x,y):= G_{\sharp}(x-y)$. In the following we define the 1-d periodic single layer potential and 1-d periodic Neumann-Poincar\'e operator, respectively, for a bounded domain $B\Subset\big(-\df{1}{2},\df{1}{2}\big)\times\mathbb{R}$ which we assume to be of class $\mathcal{C}^{1,\alpha}$ for some $0<\alpha<1$. Let
\bea
\mathcal{S}_{B\sharp}: H^{-\f{1}{2}}(\p B) &\longrightarrow & H^{1}_{\mathrm{loc}}(\R^2), H^{\f{1}{2}}(\p B) \nonumber\\
\varphi &\longmapsto & \mathcal{S}_{B,\sharp} [\varphi](x) = \int_{\p B} G_{\sharp}(x, y) \varphi(y) d\sigma(y) \nonumber
\eea
for  $x \in \mathbb{R}^2, x \in \p B$
and let
\bea 
\mathcal{K}_{B\sharp}^*: H^{-\f{1}{2}}(\p B) &\longrightarrow & H^{-\f{1}{2}}(\p B) \nonumber \\
\varphi &\longmapsto & \mathcal{K}_{B,\sharp}^* [\varphi](x) = \int_{\p B } \f{\p G_{\sharp}(x, y)}{ \p \nu(x)} \varphi(y) d\sigma(y)  \nonumber
\eea
for $x \in \p B$. As in \cite{shapiro}, the periodic Neumann-Poincar\'e operator can be symmetrized. The following lemma holds. 
\begin{lem} \label{lem-properties periodic LP meta}
\begin{enumerate}
\item [(i)]
For any $\varphi\in H^{-\f{1}{2}}(\p B)$, $\mathcal{S}_{B\sharp}[\varphi]$ is harmonic in $B$ and in $\big(-\df{1}{2},\df{1}{2}\big) \times\mathbb{R}\backslash \overline{B}$;
\item[(ii)]
 The following trace formula holds: for any $\varphi\in H^{-\f{1}{2}}(\p B)$, 
$$
(-\f{1}{2}Id+\mathcal{K}_{B\sharp}^*)[\varphi] = \f{\p \mathcal{S}_{B\sharp}[\varphi]}{\p \nu}\Big\vert_-;
$$
\item[(iii)] The following Calder\'on identity holds:
$\mathcal{K}_{B\sharp} \mathcal{S}_{B\sharp}= \mathcal{S}_{B\sharp}\mathcal{K}_{B\sharp}^*$, where $\mathcal{K}_{B\sharp}$ is the $L^2$-adjoint of $\mathcal{K}_{B\sharp}^*$;
\item[(iv)]
The operator $\mathcal{K}_{B\sharp}^*:H^{-\f{1}{2}}_0(\p B)\rightarrow H^{-\f{1}{2}}_0(\p B) $ is compact self-adjoint equipped with the following
inner product
\be \label{innerproduct}
(u, v)_{\mathcal{H}^*_0}= - (u, \mathcal{S}_{B\sharp}[v])_{-\f{1}{2},\f{1}{2}}
\ee
with $(\cdot, \cdot)_{-\f{1}{2}, \f{1}{2}}$ being the duality pairing between $H^{-\f{1}{2}}_0(\p B)$ and  $H^{\f{1}{2}}_0(\p B)$, which makes $\mathcal{H}^*_0 $ equivalent to $H^{-\f{1}{2}}_0(\p B)$. Here, by $E_0$ we denote the zero-mean subspace of $E$.
\item[(v)] Let $(\lambda_j,\varphi_j) $, $j = 1, 2, \ldots$ be the eigenvalue and normalized eigenfunction pair of $\mathcal{K}_{B\sharp}^*$ in $\mathcal{H}^*_0(\p B)$,
then $\lambda_j \in (-\f{1}{2}, \f{1}{2})$ and $\lambda_j \rightarrow 0$ as $j \rightarrow \infty$.
\end{enumerate}
\end{lem}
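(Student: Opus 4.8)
\medskip
\noindent\emph{Proof plan.} The plan is to transport the by-now standard theory of the Neumann--Poincar\'e operator (cf.\ \cite{shapiro}) to the periodic setting by isolating the local singularity of $G_\sharp$ from a smooth remainder. From the explicit formula of the previous lemma one has, for $x,y$ in the open strip $\big(-\tfrac12,\tfrac12\big)\times\mathbb R$,
\[
G_\sharp(x-y)=\frac{1}{2\pi}\log|x-y|+r(x,y),
\]
with $r$ smooth up to the diagonal (since $\sinh^2(\pi t)+\sin^2(\pi s)=\pi^2(s^2+t^2)+O(|(s,t)|^4)$) and $G_\sharp(-z)=G_\sharp(z)$, so the kernel is symmetric; I would also record the global behaviour $G_\sharp(x)=\tfrac12|x_2|-\tfrac{\log4}{4\pi}+O(e^{-2\pi|x_2|})$ as $|x_2|\to\infty$, uniformly in $x_1$, the linear growth here being the only genuinely new feature relative to the flat case. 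For (i) I would note that $G_\sharp$ is harmonic off the lattice $\{(n,0):n\in\mathbb Z\}$, that for $x\neq y$ in the open strip $x-y$ has $|x_1-y_1|<1$ and hence avoids the lattice, and then differentiate under the integral sign.

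\smallskip
For (ii)--(iii) I would write $\mathcal S_{B\sharp}=\mathcal S_B^{\mathrm f}+\mathcal R$, with $\mathcal S_B^{\mathrm f}$ the usual planar single-layer potential and $\mathcal R$ the operator with smooth kernel $r$. The classical jump relations $\partial_\nu\mathcal S_B^{\mathrm f}[\varphi]|_\pm=(\pm\tfrac12 Id+(\mathcal K_B^{\mathrm f})^*)[\varphi]$ together with the continuity of $\partial_\nu\mathcal R[\varphi]$ across $\partial B$ give $\partial_\nu\mathcal S_{B\sharp}[\varphi]|_\pm=(\pm\tfrac12 Id+\mathcal K_{B\sharp}^*)[\varphi]$, of which (ii) is the ``$-$'' instance (the ``$+$'' one will be needed below). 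For (iii): $\mathcal S_{B\sharp}$ is self-adjoint on $L^2(\partial B)$ because its kernel is symmetric, and for $\varphi,\psi\in H^{-1/2}(\partial B)$ the potentials $u=\mathcal S_{B\sharp}[\varphi]$, $v=\mathcal S_{B\sharp}[\psi]$ are harmonic in $B$ and continuous up to $\partial B$ by (i); then Green's identity $\int_{\partial B}(u\,\partial_\nu v|_--v\,\partial_\nu u|_-)\,d\sigma=0$, combined with (ii) and the self-adjointness of $\mathcal S_{B\sharp}$, yields $\langle\varphi,\mathcal S_{B\sharp}\mathcal K_{B\sharp}^*\psi\rangle=\langle\varphi,\mathcal K_{B\sharp}\mathcal S_{B\sharp}\psi\rangle$ for all $\varphi$, which is the Calder\'on identity.

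\smallskip
Part (iv) is the heart of the matter, and hinges on the positivity of $-\mathcal S_{B\sharp}$ on $H^{-1/2}_0(\partial B)$. For $\varphi$ with $\int_{\partial B}\varphi=0$, the asymptotics of $G_\sharp$ show that $u=\mathcal S_{B\sharp}[\varphi]$ converges exponentially to the opposite constants $\mp\tfrac12\int_{\partial B}y_2\varphi(y)\,d\sigma(y)$ as $x_2\to\pm\infty$, so $\nabla u$ is square-integrable over the whole strip with no boundary contribution at infinity; adding Green's identity in $B$ to Green's identity in the strip minus $\overline B$ (the contributions at $x_1=\pm\tfrac12$ cancelling by periodicity) then gives
\[
-\langle\varphi,\mathcal S_{B\sharp}[\varphi]\rangle=\int_{(-1/2,1/2)\times\mathbb R\setminus\partial B}|\nabla u|^2\,dx\ge 0,
\]
with equality forcing $u$ globally constant; since its limits at $\pm\infty$ are opposite, that constant vanishes, so $u\equiv0$ and $\varphi=\partial_\nu u|_+-\partial_\nu u|_-=0$. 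Hence $(\cdot,\cdot)_{\mathcal H^*_0}$ is a genuine inner product, and its equivalence with the $H^{-1/2}_0$ duality follows from boundedness of $\mathcal S_{B\sharp}:H^{-1/2}(\partial B)\to H^{1/2}(\partial B)$ plus the standard coercivity argument on the zero-mean subspace (legitimate since $\mathcal S_{B\sharp}-\mathcal S_B^{\mathrm f}$ is smoothing). That $\mathcal K_{B\sharp}^*$ preserves $H^{-1/2}_0(\partial B)$ follows from (ii) and the divergence theorem, $\int_{\partial B}(-\tfrac12 Id+\mathcal K_{B\sharp}^*)[\varphi]=\int_{\partial B}\partial_\nu\mathcal S_{B\sharp}[\varphi]|_-=\int_B\Delta\mathcal S_{B\sharp}[\varphi]=0$, so $\int_{\partial B}\mathcal K_{B\sharp}^*[\varphi]=\tfrac12\int_{\partial B}\varphi$. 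Self-adjointness of $\mathcal K_{B\sharp}^*$ with respect to $(\cdot,\cdot)_{\mathcal H^*_0}$ is exactly the Calder\'on identity (iii): $(\mathcal K_{B\sharp}^*u,v)_{\mathcal H^*_0}=-\langle\mathcal K_{B\sharp}^*u,\mathcal S_{B\sharp}v\rangle=-\langle u,\mathcal K_{B\sharp}\mathcal S_{B\sharp}v\rangle=-\langle u,\mathcal S_{B\sharp}\mathcal K_{B\sharp}^*v\rangle=(u,\mathcal K_{B\sharp}^*v)_{\mathcal H^*_0}$. Finally, compactness: in $\partial_{\nu(x)}G_\sharp(x-y)=\tfrac1{2\pi}\langle x-y,\nu(x)\rangle/|x-y|^2+\partial_{\nu(x)}r(x,y)$ the first kernel is $O(|x-y|^{-1+\alpha})$ on the $\mathcal C^{1,\alpha}$ curve $\partial B$ and the second is smooth, so $\mathcal K_{B\sharp}^*$ is compact on $L^2(\partial B)$, hence on $\mathcal H^*_0$.

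\smallskip
Part (v) is then immediate from the spectral theorem for compact self-adjoint operators, which supplies the orthonormal eigensystem $(\lambda_j,\varphi_j)$ with $\lambda_j\in\mathbb R$ and $\lambda_j\to0$. To locate the $\lambda_j$ I would set $u_j=\mathcal S_{B\sharp}[\varphi_j]$ and use $\partial_\nu u_j|_\pm=(\pm\tfrac12+\lambda_j)\varphi_j$ together with $\langle\varphi_j,\mathcal S_{B\sharp}\varphi_j\rangle=-\|\varphi_j\|_{\mathcal H^*_0}^2$: the two halves of the energy identity of Part (iv) read $\int_B|\nabla u_j|^2=(\tfrac12-\lambda_j)\|\varphi_j\|_{\mathcal H^*_0}^2$ and $\int_{(-1/2,1/2)\times\mathbb R\setminus\overline B}|\nabla u_j|^2=(\tfrac12+\lambda_j)\|\varphi_j\|_{\mathcal H^*_0}^2$, both nonnegative, so $\lambda_j\in[-\tfrac12,\tfrac12]$; strictness holds because $\lambda_j=\tfrac12$ (resp.\ $-\tfrac12$) would make $u_j$ constant in $B$ (resp.\ in the exterior strip), and then continuity across $\partial B$, the vanishing of $\int_{\partial B}\varphi_j$, and the exponential decay at $x_2\to\pm\infty$ would force $u_j\equiv0$ and $\varphi_j=0$. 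The main obstacle is therefore Part (iv), and within it the coercivity of $-\mathcal S_{B\sharp}$ on the zero-mean subspace: because the periodic Green function grows linearly in $|x_2|$, one must first pin down the precise exponential convergence of $\mathcal S_{B\sharp}[\varphi]$ at $x_2\to\pm\infty$ for zero-mean $\varphi$ before the global energy identity---the periodic analogue of the logarithmic-capacity obstruction in the plane---becomes available.
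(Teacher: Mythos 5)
Your proposal is correct and follows essentially the same route as the paper: the paper's proof consists precisely of the decomposition $G_\sharp=\frac{1}{2\pi}\log|\cdot|+(\text{smooth remainder})$, hence $\mathcal S_{B\sharp}=\mathcal S_B+\mathcal G_B$ and $\mathcal K_{B\sharp}^*=\mathcal K_B^*+\mathcal F_B$ with smoothing perturbations, after which it cites the standard Neumann--Poincar\'e theory. You simply carry out in detail the steps the paper delegates to its references, and you correctly identify and handle the one genuinely new point (the linear growth of $G_\sharp$ in $x_2$ and the constant limits of $\mathcal S_{B\sharp}[\varphi]$ at infinity for zero-mean $\varphi$), so no gap.
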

\begin{proof}
First, note that a Taylor expansion of $\sinh^2(\pi x_2) + \sin^2(\pi x_1)$ yields
\beas
G_{\sharp}(x) = \f{\log|x|}{2\pi} + R(x),
\eeas
where $R$ is a smooth function such that
\beas
R(x) = \f{1}{4\pi}\log(1+O(x_2^2-x_1^2)).
\eeas
We can decompose the operators $\mathcal{S}_{B\sharp}$ and $\mathcal{K}_{B\sharp}^*$ on $\mathcal{H}^*_0(\p B)$ accordingly. We have
\beas
\mathcal{S}_{B\sharp} = \mathcal{S}_{B} + \mathcal{G}_B, \quad
\mathcal{K}_{B\sharp}^* = \mathcal{K}_{B}^* + \mathcal{F}_B ,
\eeas
where $\mathcal{S}_{B}$ and $\mathcal{K}_{B}^*$ are the single layer potential and Neumann-Poincar\'e operator (see \cite{book2}), respectively, and $\mathcal{G}_B,\mathcal{F}_B$ are smoothing operators.
Using this fact, the proof of the Lemma follows the same arguments as those given in \cite{book3, book2}.
\end{proof}

\section{Boundary layer corrector and effective impedance} \label{sect2}

In order to compute $z$, we introduce the following asymptotic expansion \cite{abboud, allaire}:
\be \label{eq-expansion u_s meta}
u = u^{(0)} + u_{BL}^{(0)} + \delta (u^{(1)} + u_{BL}^{(1)}) + ...
\ee
where the leading-order term $u^{(0)}$ is solution to
\beas
 \left\{
\begin{array} {ll}
&\ds \Delta  u^{(0)} + k_m^2 u^{(0)}  = 0 \quad \mbox{in } \R^2_+, \\
\nm
& u^{(0)} = 0 \quad \mbox{on } \partial \R^2_+, \\
\nm
&  u^{(0)}-{u^i}  \,\,\,  \textnormal{satisfies an outgoing radiation condition at infinity}.
  \end{array}
 \right.
\eeas
The boundary-layer correctors  $u_{BL}^{(0)}$ and $u_{BL}^{(1)}$ have to be exponentially decaying in the $x_2$-direction. Note that 
according to \cite{abboud, allaire},
$u_{BL}^{(0)}$ is introduced in order to correct (up to the first-order in $\delta$) the transmission condition on the boundary of the nanoparticles, which is not satisfied by the leading-order term $u^{(0)}$ in the asymptotic expansion of $u$, while  $u_{BL}^{(1)}$ is a higher-order correction term and does not contribute to the first-order equivalent boundary condition in (\ref{approxz}). 

We next construct the corrector  $u_{BL}^{(0)}$. We first introduce a 
function $\alpha$ and a complex constant $\alpha_\infty$ such that they satisfy the rescaled problem:
\be \label{eq-alpha meta}
 \left\{
\begin{array} {ll}
&\ds \Delta \alpha = 0 \quad \mbox{in } \Big(\R^2_+\backslash \overline{\mathcal{B}}\Big) \cup \mathcal{B}  , \\
\nm
& \alpha |_{+} - \alpha |_{-}  =0   \quad \mbox{on } \partial \mathcal{B}, \\
\nm
&  \ds \f{1}{\mu_{m}} \f{\p \alpha}{\p \nu} \bigg|_{+} - \f{1}{\mu_{c}} \f{\p \alpha}{\p \nu} \bigg|_{-} = \Big(\f{1}{\mu_{c}}-\f{1}{\mu_{m}}\Big)\nu_2 \quad \mbox{on } \partial \mathcal{B},\\
\nm
&  \alpha = 0 \quad \mbox{on } \partial \R^2_+, \\
\nm & \alpha - \alpha_\infty \mbox{ is exponentially decaying as } x_2 \rightarrow +\infty.
  \end{array}
 \right.
\ee
Here, $\nu = (\nu_1,\nu_2)$ and
$B =  D/ {\delta}$ is repeated periodically in the $x_1$-axis with period $1$ and $\mathcal{B}$ is the collection of these periodically arranged particles.

Then $u_{BL}^{(0)}$ is defined by 
$$
u_{BL}^{(0)}(x):=\delta \f{\p u^{(0)}}{\p x_2}(x_1,0)\left(\alpha(\f{x}{\delta})-\alpha_{\infty}\right).
$$
The corrector $u^{(1)}$ can be found to be the solution to
\beas
 \left\{
\begin{array} {ll}
&\ds \Delta  u^{(1)} + k_m^2 u^{(1)}  = 0 \quad \mbox{in } \R^2_+, \\
\nm
& u^{(1)} = \alpha_\infty \frac{\partial u^{(0)}}{\partial x_2} \quad \mbox{on } \partial \R^2_+, \\
\nm
&  u^{(1)}  \,\,\,  \textnormal{satisfies an outgoing radiation condition at infinity}.
  \end{array}
 \right.
\eeas

%

By writing 
\be \label{eq-expansion u_app meta}
u_{\mathrm{app}} = u^{(0)} + u_{BL}^{(0)}+ \delta u^{(1)},
\ee
we arrive at (\ref{approxz}) with $z= - \alpha_\infty$, up to a second order term in $\delta$. We summarize the above results in the following theorem. 
\begin{thm}
The solution $u_{\mathrm{app}}$ to (\ref{approxz}) with $z= - \alpha_\infty$ approximates pointwisely (for $x_2>0$) the exact solution $u$ to (\ref{eq-problem setting meta}) as $\delta \rightarrow 0$, up to a second order term in $\delta$. 
\end{thm}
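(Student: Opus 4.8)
The plan is to make the formal two-scale expansion \eqref{eq-expansion u_s meta}--\eqref{eq-expansion u_app meta} rigorous and then compare the resulting approximant \emph{simultaneously} with the exact solution $u$ of \eqref{eq-problem setting meta} and with the exact solution $u_{\mathrm{app}}$ of \eqref{approxz}, closing the argument with a triangle inequality. As a first step I would check that every term in the expansion is well defined. The fields $u^{(0)},u^{(1)}$ solve Dirichlet--Helmholtz problems in the half-space with an outgoing condition, so they exist, are unique, and are $\mathcal{C}^\infty$ up to $\partial\R^2_+$; in particular $\partial_{x_2}u^{(0)}(\cdot,0)\in H^s_{\mathrm{loc}}(\R)$ for all $s$. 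For the cell problem \eqref{eq-alpha meta} I would eliminate the Dirichlet condition on $\{x_2=0\}$ by the odd reflection $\alpha(x_1,-x_2):=-\alpha(x_1,x_2)$, which turns it into a $1$-periodic transmission problem in $\R^2$ for the union $\widehat{\mathcal B}$ of $\mathcal B$ and its mirror image. Representing the solution as a periodic single-layer potential $\mathcal S_{\widehat B\sharp}[\varphi]$ over $\partial\widehat B$ with $\varphi$ of zero mean, the transmission condition reduces to $\big(\lambda\,Id-\mathcal K^{*}_{\widehat B\sharp}\big)[\varphi]=\psi$ with $\lambda=\tfrac{\mu_c+\mu_m}{2(\mu_c-\mu_m)}$ and $\psi$ built from $\nu_2$; since $\Im m\,\mu_c>0$ forces $\Im m\,\lambda\neq0$, hence $\lambda\notin[-\tfrac12,\tfrac12]$, Lemma~\ref{lem-properties periodic LP meta}(v) gives invertibility and thus a unique $\alpha$ and a unique constant $\alpha_\infty$. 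Finally, the Fourier representation of $G_{\sharp}$ from Section~\ref{Gper} (the $n\neq0$ modes carry $e^{-2\pi|n||x_2|}$) yields $|\partial^{\beta}(\alpha-\alpha_\infty)(y)|\le C_{\beta}e^{-c\,y_2}$, which after the rescaling $y=x/\delta$ is $O(\delta^{N})$ for every $N$ on any strip $\{x_2\ge c_0>0\}$.

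Next I would substitute $v:=u^{(0)}+u^{(0)}_{BL}+\delta u^{(1)}$ into \eqref{eq-problem setting meta} and bound the residuals. Using $\Delta_x[\alpha(x/\delta)]=\delta^{-2}(\Delta\alpha)(x/\delta)=0$ off $\partial\mathcal D$, a direct computation shows that $\Delta v+k_m^2 v$ equals $2\,\partial_{x_1}\!\big(\partial_{x_2}u^{(0)}(x_1,0)\big)\,(\partial_{y_1}\alpha)(x/\delta)$ plus genuinely $O(\delta)$ terms, all carrying the localisation $e^{-cx_2/\delta}$; similarly the transmission conditions on $\partial\mathcal D$ hold only up to an $O(\delta)$ defect (coming from the Taylor remainder of $u^{(0)}$ at $\{x_2=0\}$, the tangential cross-term in $u^{(0)}_{BL}$, and $\delta\,\partial_\nu u^{(1)}$), while the Dirichlet condition on $\partial\R^2_+$ is met exactly to this order because $\alpha|_{x_2=0}=0$ makes $u^{(0)}_{BL}|_{x_2=0}=-\delta\alpha_\infty\partial_{x_2}u^{(0)}(\cdot,0)$ cancel $\delta u^{(1)}|_{x_2=0}$. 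Following \cite{abboud,allaire}, the key observation is that each of these defects is (an oscillation of zero $y_1$-mean, since $\int_0^1\partial_{y_1}\alpha\,dy_1=0$, or a $\delta$-scale surface term on the small particles) multiplied by a slowly varying amplitude, so when it is inverted against the outgoing Dirichlet Green function of the half-space its contribution to the field on every strip $\{x_2\ge c_0>0\}$ is only $O(\delta^2)$.

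It then remains to convert this into a statement about $u$ and $u_{\mathrm{app}}$. Problem \eqref{eq-problem setting meta} is uniquely solvable (no real plasmonic resonance occurs, since $\Im m\,\eps_c>0$ and $\Im m\,\mu_c>0$), and I would derive a $\delta$-uniform a priori estimate for it by subtracting the outgoing Dirichlet half-space Green function and reducing the error equation to an integral equation on the rescaled $1$-periodic interface $\partial\mathcal B$, whose governing boundary operator is $\lambda\,Id-\mathcal K^{*}_{\widehat B\sharp}$, invertible with $\delta$-independent bound because $\lambda$ is at positive distance from the spectrum $[-\tfrac12,\tfrac12]\ni\lambda_j\to0$ of Lemma~\ref{lem-properties periodic LP meta}. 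This gives $\|u-v\|_{H^1(\Omega\cap B_R)}\le C_R\delta^2$, and interior elliptic regularity on small balls that avoid $\partial\mathcal D$ and $\partial\R^2_+$ once $\delta$ is small yields $|u(x)-v(x)|\le C_x\delta^2$ for every $x$ with $x_2>0$; on $\{x_2\ge c_0>0\}$ one has $u^{(0)}_{BL}(x)=O(\delta^{\infty})$, hence $|u(x)-u^{(0)}(x)-\delta u^{(1)}(x)|\le C_x\delta^2$ there. Running the analogous (and much easier) expansion for \eqref{approxz}, one matches $u_{\mathrm{app}}=u^{(0)}+\delta u^{(1)}+O(\delta^2)$ by equating orders in $u_{\mathrm{app}}+\delta z\,\partial_{x_2}u_{\mathrm{app}}=0$: order one gives the Dirichlet solution $u^{(0)}$, and order $\delta$ gives $u^{(1)}|_{x_2=0}=-z\,\partial_{x_2}u^{(0)}(\cdot,0)=\alpha_\infty\partial_{x_2}u^{(0)}(\cdot,0)$, which is exactly the boundary datum prescribed for the corrector $u^{(1)}$; stability of \eqref{approxz} then gives $|u_{\mathrm{app}}(x)-u^{(0)}(x)-\delta u^{(1)}(x)|\le C_x\delta^2$ on $\{x_2>0\}$. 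Combining the two estimates yields $|u(x)-u_{\mathrm{app}}(x)|\le C_x\delta^2$ for each fixed $x$ with $x_2>0$, which is the assertion.

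I expect the crux to be the third step: obtaining an a priori bound for \eqref{eq-problem setting meta} whose constant does not degenerate as the nanoparticles shrink and accumulate on $\{x_2=0\}$, and, in tandem, verifying in the second step that every low-order residual really carries the oscillation/localisation structure that demotes its effect on the far field from $O(\delta)$ to $O(\delta^2)$ (this is where the zero $y_1$-mean of $\partial_{y_1}\alpha$, the $O(\delta)$ size of the particles, and the exponential decay of $\alpha-\alpha_\infty$ all enter). Everything else is Taylor expansion, the spectral theory of $\mathcal K^{*}_{B\sharp}$ already recorded in Lemma~\ref{lem-properties periodic LP meta}, and standard half-space Helmholtz theory.
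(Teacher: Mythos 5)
Your proposal follows essentially the same route as the paper: the boundary-layer ansatz $u=u^{(0)}+u^{(0)}_{BL}+\delta(u^{(1)}+u^{(1)}_{BL})+\cdots$ with the cell problem \eqref{eq-alpha meta} solved by a periodic single-layer potential and the operator $\lambda_\mu Id-(\mathcal{K}^*_{B\sharp})^+$, the cancellation on $\partial\R^2_+$ coming from $\alpha|_{x_2=0}=0$, and the identification $z=-\alpha_\infty$ by matching the order-$\delta$ boundary datum of $u^{(1)}$. In fact you supply more than the paper does, since the paper only records the formal construction and delegates the residual and stability estimates to \cite{abboud,allaire}, whereas you sketch how those estimates would be carried out; your steps are consistent with the paper's framework.
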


In order to compute $\alpha_\infty$, we derive an integral representation for the solution  $\alpha$ to \eqref{eq-alpha meta}.  
We make use of the periodic Green function $G_{\sharp}$ defined by (\ref{eq-Lap Green periodic meta}). Let 
\beas
G_{\sharp}^+(x, y) = G_{\sharp}\big((x_1-y_1, x_2-y_2)\big)-G_{\sharp}\big((x_1-y_1, -x_2-y_2)\big),
\eeas
which is the periodic Green's function in the upper half space with Dirichlet boundary conditions, 
and define
\beas
\mathcal{S}_{B\sharp}^+: H^{-\f{1}{2}}(\p B) &\longrightarrow & H^{1}_{\mathrm{loc}}(\R^2), H^{\f{1}{2}}(\p B) \\
\varphi &\longmapsto & \mathcal{S}_{B,\sharp}^+ [\varphi](x) = \int_{\p B} G_{\sharp}^+(x, y) \varphi(y) d\sigma(y)
\eeas
for $x \in \mathbb{R}^2_+, x \in \p B$ and 
\begin{equation} \label{periodicNP}
\begin{array}{l}
\ds (\mathcal{K}_{B\sharp}^*)^+: H^{-\f{1}{2}}(\p B) \longrightarrow  H^{-\f{1}{2}}(\p B) \\
\ds \varphi  \longmapsto  (\mathcal{K}_{B,\sharp}^*)^+ [\varphi](x) = \int_{\p B } \f{\p G_{\sharp}^+(x, y)}{ \p \nu(x)} \varphi(y) d\sigma(y)
\end{array}
\end{equation}
for $ x \in \p B$. 

It is clear that the results of Lemma \ref{lem-properties periodic LP meta} hold true for $\mathcal{S}_{B\sharp}^+$ and $(\mathcal{K}_{B\sharp}^*)^+$. Moreover, for any $\varphi\in H^{-\f{1}{2}}(\p B)$, we have
\beas
\mathcal{S}_{B,\sharp}^+ [\varphi](x) = 0 \quad \mbox{for } x \in \partial \R^2_+.
\eeas

Now, we can readily see that $\alpha$ can be represented as $\alpha = \mathcal{S}_{B,\sharp}^+[\varphi]$, where 
$\varphi\in H^{-\f{1}{2}}(\p B)$ satisfies
\beas
\f{1}{\mu_{m}} \f{\p \mathcal{S}_{B,\sharp}^+[\varphi]}{\p \nu} \bigg|_{+} - \f{1}{\mu_{c}} \f{\p \mathcal{S}_{B,\sharp}^+[\varphi]}{\p \nu} \bigg|_{-} = \Big(\f{1}{\mu_{c}}-\f{1}{\mu_{m}}\Big)\nu_2 \quad \mbox{on }\p B.
\eeas
Using the jump formula from Lemma \ref{lem-properties periodic LP meta}, we arrive at
\beas
\big(\lambda_{\mu}Id - (\mathcal{K}_{B\sharp}^*)^+\big)[\varphi] = \nu_2,
\eeas
where
\beas
\lambda_{\mu} = \f{\mu_c+\mu_m}{2(\mu_c-\mu_m)}.
\eeas
Therefore, we obtain that
\beas
\alpha = \mathcal{S}_{B,\sharp}^+\big(\lambda_{\mu}Id - (\mathcal{K}_{B\sharp}^*)^+\big)^{-1}[\nu_2].
\eeas

\begin{lem} \label{lem-alpha assymp meta} 
Let $x=(x_1,x_2)$. Then, for $x_2 \rightarrow +\infty$, the following asymptotic expansion holds:
\beas
\alpha = \alpha_{\infty} + O(e^{-x_2}),
\eeas
with
\beas
\alpha_{\infty} = -\int_{\p B} y_2 \big(\lambda_{\mu}Id - (\mathcal{K}_{B\sharp}^*)^+\big)^{-1}[\nu_2](y) d\sigma(y).
\eeas
\end{lem}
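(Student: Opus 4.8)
The plan is to exploit the fact that $\alpha = \mathcal{S}_{B,\sharp}^+[\varphi]$ with $\varphi = \big(\lambda_{\mu}Id - (\mathcal{K}_{B\sharp}^*)^+\big)^{-1}[\nu_2]$, so it suffices to understand the behavior of $\mathcal{S}_{B,\sharp}^+[\varphi](x)$ as $x_2 \to +\infty$ for a fixed density $\varphi$. First I would write out the kernel $G_{\sharp}^+(x,y) = G_{\sharp}(x_1-y_1, x_2-y_2) - G_{\sharp}(x_1-y_1, -x_2-y_2)$ using the closed form from Lemma 1, or more conveniently using the Fourier series representation $G_{\sharp}(x) = \tfrac{1}{2}|x_2| + c - \sum_{n\geq 1}\tfrac{1}{2\pi n}e^{-2\pi n|x_2|}\cos(2\pi n x_1)$ established in the proof of Lemma 1. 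Since $B \Subset \big(-\tfrac12,\tfrac12\big)\times\mathbb{R}$, for $x_2$ large enough and $y\in\partial B$ we have $x_2 - y_2 > 0$ and $-x_2-y_2 < 0$, so all the absolute values resolve and the two $\tfrac12|x_2|$ linear terms cancel in the difference, as do the constants $c$. What survives is
\[
G_{\sharp}^+(x,y) = \f{1}{2}(x_2 - y_2) - \f{1}{2}(-(-x_2-y_2)) - \sum_{n\geq 1}\f{1}{2\pi n}\Big(e^{-2\pi n(x_2-y_2)} - e^{-2\pi n(x_2+y_2)}\Big)\cos(2\pi n(x_1-y_1)),
\]
and the linear part is $\tfrac12(x_2-y_2) - \tfrac12(x_2+y_2) = -y_2$, a constant in $x$. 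The exponential tail is $O(e^{-2\pi x_2})$ uniformly in $y\in\partial B$; in fact it decays like $e^{-x_2}$ in the weaker sense stated (the worst mode is $n=1$, giving $e^{-2\pi x_2}$, which is certainly $O(e^{-x_2})$).

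Integrating against $\varphi$, the constant-in-$x$ part gives $\int_{\partial B}(-y_2)\varphi(y)\,d\sigma(y)$, and the remaining term is $O(e^{-x_2})$ since $\varphi\in H^{-1/2}(\partial B)$ pairs continuously against the smooth (in $y$) exponentially small kernel, uniformly for $x_2$ in the relevant range — here I would invoke the duality pairing bound $|\langle \varphi, \psi_x\rangle| \leq \|\varphi\|_{H^{-1/2}}\|\psi_x\|_{H^{1/2}}$ with $\|\psi_x\|_{H^{1/2}(\partial B)} = O(e^{-2\pi x_2})$. Hence
\[
\alpha(x) = -\int_{\partial B} y_2\,\varphi(y)\,d\sigma(y) + O(e^{-x_2}), \qquad \varphi = \big(\lambda_{\mu}Id - (\mathcal{K}_{B\sharp}^*)^+\big)^{-1}[\nu_2],
\]
which is exactly the claimed formula for $\alpha_\infty$. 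It remains only to check that $\varphi$ is well-defined, i.e. that $\lambda_\mu$ is not in the spectrum of $(\mathcal{K}_{B\sharp}^*)^+$: since $\Im m\,\mu_c > 0$ while the eigenvalues of $(\mathcal{K}_{B\sharp}^*)^+$ lie in $(-\tfrac12,\tfrac12)\subset\mathbb{R}$ by Lemma 2(v), and $\lambda_\mu = \tfrac{\mu_c+\mu_m}{2(\mu_c-\mu_m)}$ has nonzero imaginary part under the standing assumptions, the operator $\lambda_\mu Id - (\mathcal{K}_{B\sharp}^*)^+$ is invertible on $H^{-1/2}(\partial B)$ (one should also note $\nu_2 \in H^{-1/2}_0(\partial B)$ so the restriction to the zero-mean subspace is consistent).

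The main obstacle is the bookkeeping around the non-decaying part of $G_{\sharp}$: one must be careful that the linear growth $\tfrac12|x_2|$ in $G_{\sharp}$ cancels cleanly in the half-space combination $G_{\sharp}^+$ for $x_2$ large, leaving a genuine constant plus exponential remainder, rather than a term growing in $x_2$. This is what forces the two half-space image terms and is the reason the Dirichlet condition on $\partial\R^2_+$ is essential. A secondary technical point is to justify uniformity of the $O(e^{-x_2})$ estimate when the density $\varphi$ is only in $H^{-1/2}(\partial B)$ rather than, say, continuous; this is handled by the mapping properties in Lemma 2 and the smoothness of the tail kernel in the $y$ variable, which lets the duality pairing absorb the low regularity of $\varphi$.
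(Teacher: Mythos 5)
Your proof is correct and follows essentially the same route as the paper: both reduce the lemma to the asymptotics $G_{\sharp}^+(x,y) = -y_2 + O(e^{-x_2})$ uniformly for $y \in \partial B$ as $x_2 \to +\infty$, and then integrate against the density $\varphi = \big(\lambda_{\mu}Id - (\mathcal{K}_{B\sharp}^*)^+\big)^{-1}[\nu_2]$. The only difference is cosmetic — you expand the Fourier-series form of $G_{\sharp}$ while the paper expands the closed hyperbolic-sine form (the two are the same function, as shown in Lemma 1) — and your added remarks on the $H^{-1/2}$--$H^{1/2}$ duality bound and the invertibility of $\lambda_\mu Id - (\mathcal{K}_{B\sharp}^*)^+$ are welcome refinements rather than a new idea.
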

\begin{proof}
The result follows from an asymptotic analysis of $G_{\sharp}^+(x, y)$. Indeed, suppose that $x_2 \rightarrow +\infty$, we have
$$\begin{array}{l}
G_{\sharp}^+(x, y) = \f{1}{4\pi}\log\big( \sinh^2(\pi (x_2-y_2) ) + \sin^2(\pi (x_1-y_1) ) \big) \\
\nm \ds  - \f{1}{4\pi}\log\big( \sinh^2(\pi (x_2+y_2) ) + \sin^2(\pi (x_1-y_1)) \big)\\
\nm \ds  = \f{1}{4\pi}\log\big( \sinh^2(\pi (x_2-y_2) )\big) \\
\nm \ds   - \f{1}{4\pi}\log\big( \sinh^2(\pi (x_2+y_2) )\big)  \\
\nm \ds  + O\big(\log\left(1 + \f{1}{\sinh^2(x_2)}\right) \big)\\
\nm \ds  = \f{1}{2\pi}\bigg(\log\Big(\f{e^{\pi (x_2-y_2)}-e^{-\pi (x_2+y_2) }}{2}\Big) \\
\nm \ds  - \log\Big(\f{e^{\pi (x_2+y_2) }-e^{-\pi (x_2-y_2) }}{2}\Big)\bigg) + O\big(\log\left(1 + e^{-x_2^2}\right) \big)\\
\nm \ds  = -y_2 + O(e^{-x_2}),
\end{array}
$$
which yields the desired result. 
\end{proof}

Finally, it is important to note that $\alpha_{\infty}$ depends on the geometry 
and size of the particle $B$.  

Since $(\mathcal{K}_{B\sharp}^*)^+ : \mathcal{H}^*_0 \rightarrow \mathcal{H}^*_0$ is a compact self-adjoint operator, where $\mathcal{H}^*_0$ is defined as in Lemma \ref{lem-properties periodic LP meta}, we can write
\beas
\alpha_{\infty} &=& -\int_{\p B} y_2 \big(\lambda_{\mu}Id - (\mathcal{K}_{B\sharp}^*)^+\big)^{-1}[\nu_2](y) d\sigma(y),\\
&=& -\int_{\p B} y_2 \sum_{j=1}^{\infty}\f{(\varphi_j,\nu_2)_{\mathcal{H}^*_0}\varphi_j(y)}{\lambda_{\mu}-\lambda_{j}} d\sigma(y),\\
&=& \sum_{j=1}^{\infty}\f{(\varphi_j,\nu_2)_{\mathcal{H}^*_0}(\varphi_j,y_2)_{-\f{1}{2},\f{1}{2}}}{\lambda_{\mu}-\lambda_{j}},\\
\eeas
where $\lambda_1,\lambda_2,\dots$ are the eigenvalues of $(\mathcal{K}_{B\sharp}^*)^+$ and $\varphi_1,\varphi_2,\dots$ is a corresponding orthornormal basis of eigenvectors.

On the other hand, by integrating by parts we get $$(\varphi_j,y_2)_{-\f{1}{2},\f{1}{2}} = 
\frac{1}{\frac{1}{2} - \lambda_j} (\varphi_j,\nu_2)_{
\mathcal{H}^*_0}.$$ This together with the fact that
$\Im m \,  \lambda_\mu <0$ (by the Drude model \cite{pierre}), 
yield the following lemma.  
\begin{lem} We have $\Im m\, \alpha_\infty >0$.
\end{lem}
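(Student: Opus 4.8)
The plan is to start from the spectral expansion of $\alpha_\infty$ obtained immediately above the statement, substitute into it the integration-by-parts identity $(\varphi_j,y_2)_{-\f{1}{2},\f{1}{2}} = \frac{1}{\f{1}{2}-\lambda_j}(\varphi_j,\nu_2)_{\mathcal{H}^*_0}$, and then determine the sign of the imaginary part summand by summand. After substitution,
\[
\alpha_\infty \;=\; \sum_{j=1}^{\infty}\f{\big((\varphi_j,\nu_2)_{\mathcal{H}^*_0}\big)^2}{(\lambda_\mu-\lambda_j)\,(\f{1}{2}-\lambda_j)} .
\]
First I would observe that every quantity here except $\lambda_\mu$ is real: the kernel $G_{\sharp}^+$ is real-valued, hence $\mathcal{S}_{B\sharp}^+$ and $(\mathcal{K}_{B\sharp}^*)^+$ map real densities to real functions, the eigenvalues $\lambda_j$ are real, the eigenfunctions $\varphi_j$ may be chosen real, and since $\nu_2$ is real each coefficient $(\varphi_j,\nu_2)_{\mathcal{H}^*_0}$ is real. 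Therefore $\big((\varphi_j,\nu_2)_{\mathcal{H}^*_0}\big)^2\ge 0$, and by Lemma \ref{lem-properties periodic LP meta}(v) one has $\lambda_j\in(-\f{1}{2},\f{1}{2})$, so $\f{1}{2}-\lambda_j>0$.

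Next I would take imaginary parts term by term. Since $\Im m(\lambda_\mu-\lambda_j)=\Im m\,\lambda_\mu<0$ (the Drude-model sign recalled just before the statement),
\[
\Im m\,\f{1}{\lambda_\mu-\lambda_j} \;=\; \f{-\,\Im m\,\lambda_\mu}{|\lambda_\mu-\lambda_j|^2}\;>\;0 ,
\]
so the imaginary part of the $j$-th term equals $\dfrac{\big((\varphi_j,\nu_2)_{\mathcal{H}^*_0}\big)^2}{\f{1}{2}-\lambda_j}\cdot\dfrac{-\,\Im m\,\lambda_\mu}{|\lambda_\mu-\lambda_j|^2}\ge 0$. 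Absolute convergence of the series is not an issue: since $\lambda_j\to 0$ the factor $|\lambda_\mu-\lambda_j|$ is bounded away from zero, while $\sum_j\big((\varphi_j,\nu_2)_{\mathcal{H}^*_0}\big)^2=\|\nu_2\|_{\mathcal{H}^*_0}^2<\infty$ by Parseval in $\mathcal{H}^*_0(\p B)$. Summing, $\Im m\,\alpha_\infty\ge 0$.

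The one point that needs a genuinely separate argument — and the step I expect to be the main obstacle — is upgrading this to a strict inequality. It suffices to exhibit a single index $j$ with $(\varphi_j,\nu_2)_{\mathcal{H}^*_0}\neq 0$, equivalently to show that $\nu_2$ has nonzero projection onto $\mathcal{H}^*_0(\p B)$. On the one hand $\int_{\p B}\nu_2\,d\sigma=\int_B\p_{x_2}1\,dx=0$ by the divergence theorem, so $\nu_2$ indeed lies in the zero-mean space $H^{-\f{1}{2}}_0(\p B)$ spanned by $\{\varphi_j\}$; on the other hand $\nu_2\not\equiv 0$, since otherwise the outward unit normal to $\p B$ would be everywhere horizontal and $\p B$ a union of vertical segments, which is impossible for the boundary of a bounded $\mathcal{C}^{1,\alpha}$ domain. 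Hence $\|\nu_2\|_{\mathcal{H}^*_0}^2=\sum_j\big((\varphi_j,\nu_2)_{\mathcal{H}^*_0}\big)^2>0$, at least one summand above has strictly positive imaginary part, and therefore $\Im m\,\alpha_\infty>0$. Everything else in the argument is routine bookkeeping with the spectral decomposition of Lemma \ref{lem-properties periodic LP meta}.
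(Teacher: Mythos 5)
Your proposal is correct and follows essentially the same route as the paper, which simply combines the spectral expansion of $\alpha_\infty$ with the integration-by-parts identity and the sign $\Im m\,\lambda_\mu<0$. You merely fill in details the paper leaves implicit (term-by-term positivity, convergence, and the strictness via $\nu_2\in H^{-\f{1}{2}}_0(\p B)$, $\nu_2\not\equiv 0$), all of which check out.
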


Finally, we give a formula for the shape derivative \cite{akl} of $\alpha_\infty$. This formula can be used to optimize $|\alpha_\infty|$ , for a given frequency $\omega$, in terms of the shape $B$ of the nanoparticle.   Let $B_\eta$ be an $\eta$-perturbation of $B$; i.e., let $h \in \mathcal{C}^1(\partial B)$ and $\partial B_\eta$ be given by 
$$
\partial B_\eta =\bigg\{ x + \eta h(x) \nu(x), x \in \partial B\bigg\}. 
$$
Following \cite{zribi} (see also \cite{book3}), we can prove that
$$\begin{array} {lll}
\alpha_\infty(B_\eta) &=&\ds \alpha_\infty(B) + \eta (\frac{\mu_m}{\mu_c} -1) \\
\nm
&& \ds \times \int_{\partial B} h \bigg[ 
\frac{\partial v}{\partial \nu} \big|_-    \frac{\partial w}{\partial \nu} \big|_-   + \frac{\mu_c}{\mu_m} \frac{\partial v}{\partial \tau} \big|_-    \frac{\partial w}{\partial \tau} \big|_-  \bigg] \, d\sigma,
\end{array}
$$
where  $\partial / \partial \tau$ is the tangential derivative on $\partial B$, $v$ and $w$ periodic with respect to $x_1$ of period $1$ and satisfy  
$$
\left\{
\begin{array} {ll}
&\ds \Delta v = 0 \quad \mbox{in } \Big(\R^2_+\backslash \overline{\mathcal{B}}\Big) \cup \mathcal{B}  , \\
\nm
& v|_{+} - v|_{-}  =0   \quad \mbox{on } \partial \mathcal{B}, \\
\nm
&  \ds  \f{\p v}{\p \nu} \bigg|_{+} - \frac{\mu_m}{\mu_c} \f{\p v}{\p \nu} \bigg|_{-} = 0 \quad \mbox{on } \partial \mathcal{B},\\
\nm
&  v- x_2 \rightarrow 0  \quad \mbox{as }  x_2 \rightarrow +\infty,
  \end{array}
 \right.
$$
and 
$$
\left\{
\begin{array} {ll}
&\ds \Delta w = 0 \quad \mbox{in } \Big(\R^2_+\backslash \overline{\mathcal{B}}\Big) \cup \mathcal{B}  , \\
\nm
& \frac{\mu_m}{\mu_c} w|_{+} -  w|_{-}  =0   \quad \mbox{on } \partial \mathcal{B}, \\
\nm
&  \ds \f{\p w}{\p \nu} \bigg|_{+} -  \f{\p w}{\p \nu} \bigg|_{-} = 0 \quad \mbox{on } \partial \mathcal{B},\\
\nm
&  w- x_2 \rightarrow 0  \quad \mbox{as }  x_2 \rightarrow +\infty,
  \end{array}
 \right.
$$
respectively. Therefore, the following lemma holds. 
\begin{lem}
The shape derivative $d_S \alpha_\infty(B)$ of $\alpha_\infty$ is given by 
$$
d_S \alpha_\infty(B) =\ds (\frac{\mu_m}{\mu_c} -1)  \bigg[\frac{\partial v}{\partial \nu} \big|_-    \frac{\partial w}{\partial \nu} \big|_-  
 + \frac{\mu_c}{\mu_m} \frac{\partial v}{\partial \tau} \big|_-    \frac{\partial w}{\partial \tau} \big|_- \bigg].
$$
\end{lem}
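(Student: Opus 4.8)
The assertion is obtained by reading off the first-order term of the expansion of $\eta\mapsto\alpha_\infty(B_\eta)$ displayed just above: that expansion shows $\eta\mapsto\alpha_\infty(B_\eta)$ is differentiable at $\eta=0$ with
$$
\frac{d}{d\eta}\Big|_{\eta=0}\alpha_\infty(B_\eta)=\Big(\frac{\mu_m}{\mu_c}-1\Big)\int_{\partial B}h\,\Big[\frac{\partial v}{\partial\nu}\Big|_-\frac{\partial w}{\partial\nu}\Big|_-+\frac{\mu_c}{\mu_m}\frac{\partial v}{\partial\tau}\Big|_-\frac{\partial w}{\partial\tau}\Big|_-\Big]\,d\sigma,
$$
and since, following \cite{akl}, the shape derivative $d_S\alpha_\infty(B)$ is by definition the density on $\partial B$ for which $\frac{d}{d\eta}\big|_{\eta=0}\alpha_\infty(B_\eta)=\int_{\partial B}h\,d_S\alpha_\infty(B)\,d\sigma$ holds for every $h\in\mathcal{C}^1(\partial B)$, the claimed formula follows. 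The substance is therefore the expansion itself, which I would establish as follows.

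First I would start from the integral representation obtained above. For the perturbed inclusion $B_\eta$, Lemma~\ref{lem-alpha assymp meta} gives $\alpha_\infty(B_\eta)=-\int_{\partial B_\eta}y_2\,\varphi_\eta(y)\,d\sigma(y)$ with $\varphi_\eta=\big(\lambda_\mu Id-(\mathcal{K}_{B_\eta\sharp}^*)^+\big)^{-1}[\nu_2]$. I would then pull everything back to $\partial B$ through the diffeomorphism $x\mapsto x+\eta h(x)\nu(x)$ and expand in powers of $\eta$ the three shape-dependent ingredients: the unit normal and the surface element on $\partial B_\eta$ (standard, with first-order terms involving $h$ and its tangential derivative), and the operator $(\mathcal{K}_{B_\eta\sharp}^*)^+$. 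For the latter, the Taylor expansion $G_\sharp(x)=\frac{\log|x|}{2\pi}+R(x)$ with $R$ smooth, recorded in the proof of Lemma~\ref{lem-properties periodic LP meta}, shows that the singular part of $G_\sharp^+$ is the reflected Laplace kernel while the remainder is smooth; hence the perturbation expansions of $(\mathcal{K}_{B\sharp}^*)^+$ and $\mathcal{S}_{B\sharp}^+$ are the classical ones for the Laplacian, up to already-smooth and harmless terms (see \cite{zribi,book2,book3}). Inverting $\lambda_\mu Id-(\mathcal{K}_{B_\eta\sharp}^*)^+$ by a Neumann series then yields $\varphi_\eta=\varphi+\eta\varphi_1+O(\eta^2)$, and substitution isolates the $O(\eta)$ coefficient of $\alpha_\infty(B_\eta)$ as a boundary integral against $h$.

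The delicate step, which I expect to be the main obstacle, is to recast this $O(\eta)$ coefficient in the stated closed form. For this I would use the jump relations and the Calder\'on identity of Lemma~\ref{lem-properties periodic LP meta}, together with the observation that $x_2+\alpha$, where $\alpha=\mathcal{S}_{B\sharp}^+[\varphi]$, solves a homogeneous transmission problem: its Dirichlet trace is continuous and $\frac{1}{\mu_m}\partial_\nu(x_2+\alpha)|_+=\frac{1}{\mu_c}\partial_\nu(x_2+\alpha)|_-$, precisely because the source term $\big(\frac{1}{\mu_c}-\frac{1}{\mu_m}\big)\nu_2$ in \eqref{eq-alpha meta} cancels the Neumann jump produced by $x_2$. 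Thus $x_2+\alpha$ coincides with the function $v$ of the lemma up to an additive constant, which is irrelevant since only $\nabla v|_-$ enters; and the pairing against $y_2$ in the functional brings in the $L^2$-adjoint $(\mathcal{K}_{B\sharp})^+$ and produces the solution $w$ of the dual transmission problem, in which the Dirichlet and Neumann traces exchange roles ($\frac{\mu_m}{\mu_c}w|_+=w|_-$, $\partial_\nu w|_+=\partial_\nu w|_-$). Decomposing $\nabla v$ and $\nabla w$ on $\partial B$ into their normal and tangential parts, using continuity of the tangential derivatives across $\partial B$ and the Neumann jump conditions to eliminate the exterior traces, and checking that the curvature contributions coming from the expansions of the surface element and the normal cancel against those from the operator perturbation, one is left with exactly $\big(\frac{\mu_m}{\mu_c}-1\big)\big[\partial_\nu v|_-\partial_\nu w|_-+\frac{\mu_c}{\mu_m}\partial_\tau v|_-\partial_\tau w|_-\big]$. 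This bookkeeping is entirely parallel to the shape-sensitivity computations for the polarization tensor and effective parameters in \cite{zribi,book3}; the only genuinely new ingredient is the periodic half-space Green function $G_\sharp^+$, whose extra smooth component does not affect the principal part of the expansion.
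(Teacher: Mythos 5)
Your proposal is correct and follows essentially the same route as the paper: the lemma is obtained by reading off the density multiplying $h$ in the first-order expansion of $\alpha_\infty(B_\eta)$, which the paper itself establishes only by appeal to \cite{zribi} and \cite{book3}. Your additional sketch of how that expansion is derived (pullback to $\partial B$, perturbation of the periodic layer potentials whose singular part is the Laplace kernel, and identification of $v$ and $w$ through the transmission problems solved by $x_2+\alpha$ and its dual) is consistent with, and more detailed than, what the paper records.
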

If we aim to maximize the functional $J:= \frac{1}{2} |\alpha_\infty|^2$ over $B$, then it can be easily seen that $J$ is Fr\'echet differentiable and its Fr\'echet derivative is given by
$
\Re e \, d_S \alpha_\infty(B) \overline{\alpha_\infty(B)}.  
$ 
As in \cite{numer}, in order to include cases where topology changes and multiple components are allowed, a level-set version of the optimization procedure described below can be developed.

\section{Numerical illustrations} \label{sect3}

\subsection{Setup and methods}

Here, we assume that the particles are made of gold and use the Drude model to compute their electric properties as a function of the wavelength. We recall that, from the Drude model \cite{pierre}, the electric properties of the particles depend on the frequency of the incoming wave, or equivalently, the wavelength. The effective impedance $\alpha_{\infty}$ is computed using periodic layer potentials. 

Figure \ref{figCirclesShift meta} shows $|\alpha_{\infty}|$ as a function of the wavelength for disks of different sizes, all centered at $(0,0.5)$.

Figure \ref{figCirclesVerticalShift meta} shows $|\alpha_{\infty}|$ as a function of the wavelength for two 
disks of the same fixed radius equal to $0.2$ but centered at two different distances  from $x_2=0$.

In Figures \ref{figCircleSingleFreq meta} and \ref{figThreeCircleSingleFreq meta} we plot $|\alpha_{\infty}|$ as a function of the wavelength for a disk and a group of three well-separated disks. We can see that a disk can be excited roughly at one single frequency whereas three disks can be excited at different frequencies but with lower values of $|\alpha_{\infty}|$.

\subsection{Results and discussion}
An important conclusion is that the spectrum of the periodic Neumann-Poincar\'e operator defined by (\ref{periodicNP}) varies with the position and size of the particles. Therefore, the resonances of the effective impedance $\alpha_{\infty}$ depend not only on the geometry of the particle $B$ but also on its size and position.
 One can see (Figs. \ref{figCirclesShift meta} and \ref{figCirclesVerticalShift meta}) a change in the magnitude and a shift of the resonances.
The plasmonics resonances shift to smaller wavelengths and the magnitude of the peak value 
 increases with increasing volume. We remark that this is not particular to the examples considered here. In fact, this is the case for any particle. These two phenomena are due to the strong interaction between the particles and the ground that appears as their sizes increase while the period of the arrangement is fixed.

Note also that in our analysis we did not assume the particles to be simply connected. In fact, the theory is still valid for particles which have two or more components. This allows for more possibilities when choosing a particular geometry for the optimization of the effective impedance. For instance, 
one may want to design a geometry such that a single frequency is excited with a very pronounced peak or, on the other hand, to excite not only a specific frequency but rather a group of them.

%
%
%
%

\section{Concluding remarks}

In this paper we have considered the scattering by an array of plasmonic nanoparticles mounted on a perfectly conducting plate and showed both analytically and numerically the significant change in the boundary condition induced by the nanoparticles at their periodic plasmonic frequencies. We have also proposed an optimization approach to maximize this change in terms of the shape of the nanoparticles. Implementation and testing of this approach will be reported elsewhere. Our results in this paper can be generalized in many directions. Different boundary conditions on the plate as well as curved plates can be considered. Our approach can be easily extended to two-dimensional arrays embedded in $\mathbb{R}^3$ and the lattice effect can be included. Full Maxwell's equations to model the light propagation can be used.  The observed extraordinary or meta properties of periodic  distributions of subwavelength resonators can be explained by the  approach proposed in this paper.


\begin{figure}[h!]
\begin{center}
\includegraphics[scale=0.25]{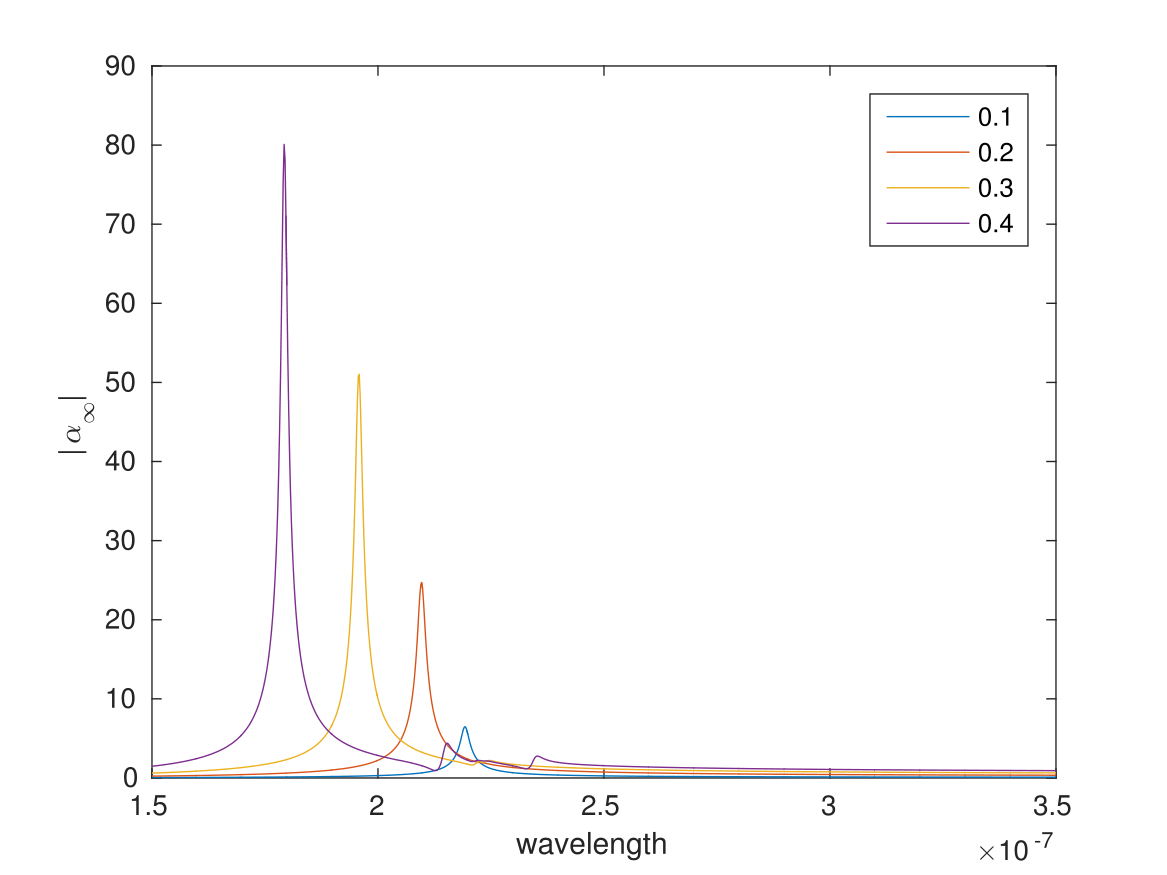}
\caption{ \label{figCirclesShift meta} $|\alpha_{\infty}|$ as a function of the wavelength for disks of different radii, ranging from $0.1$ to $0.4$.}
\end{center}
\end{figure}

\begin{figure}[h!]
\begin{center}
\includegraphics[scale=0.25]{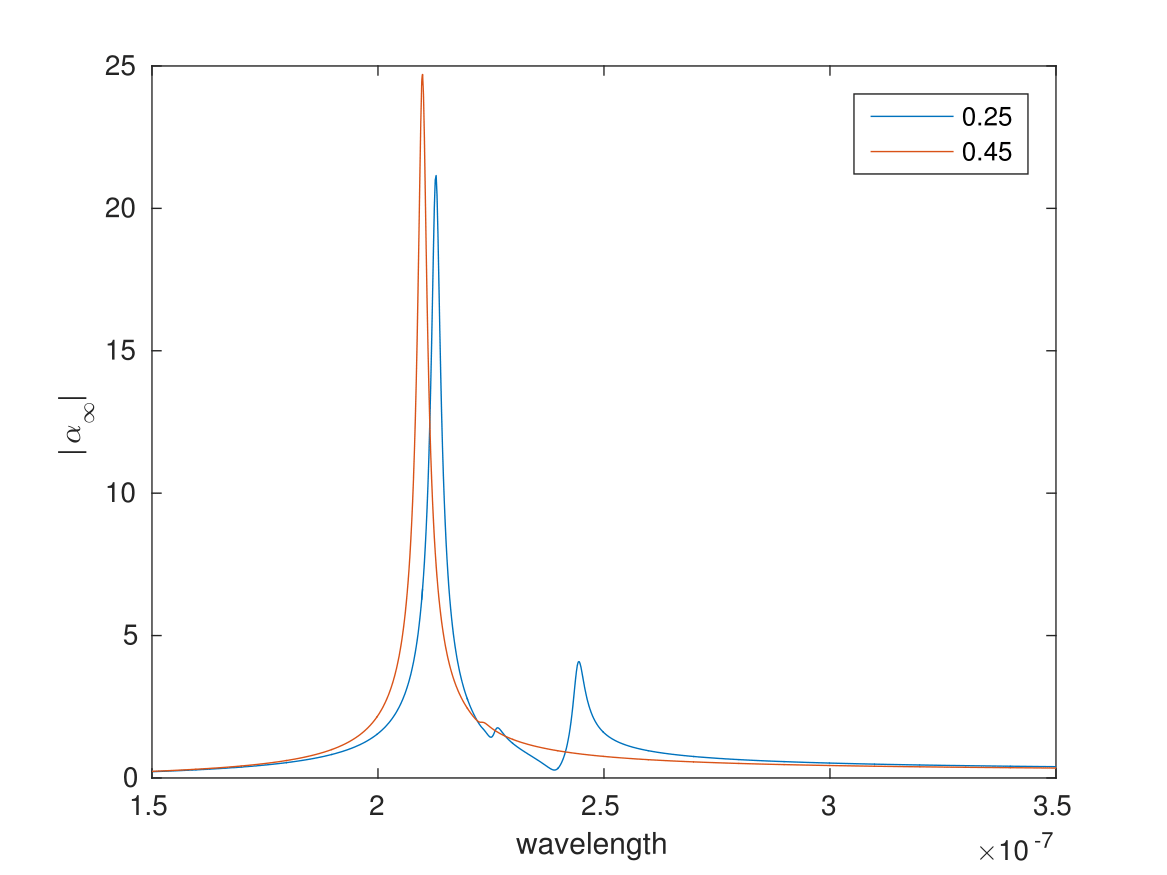}
\caption{ \label{figCirclesVerticalShift meta} $|\alpha_{\infty}|$ as a function of the wavelength for a disk centered respectively at distance $0.25$ and $0.45$ from $x_2=0$.}
\end{center}
\end{figure}

\begin{figure}[h!]
	\begin{center}
		\includegraphics[scale=0.25]{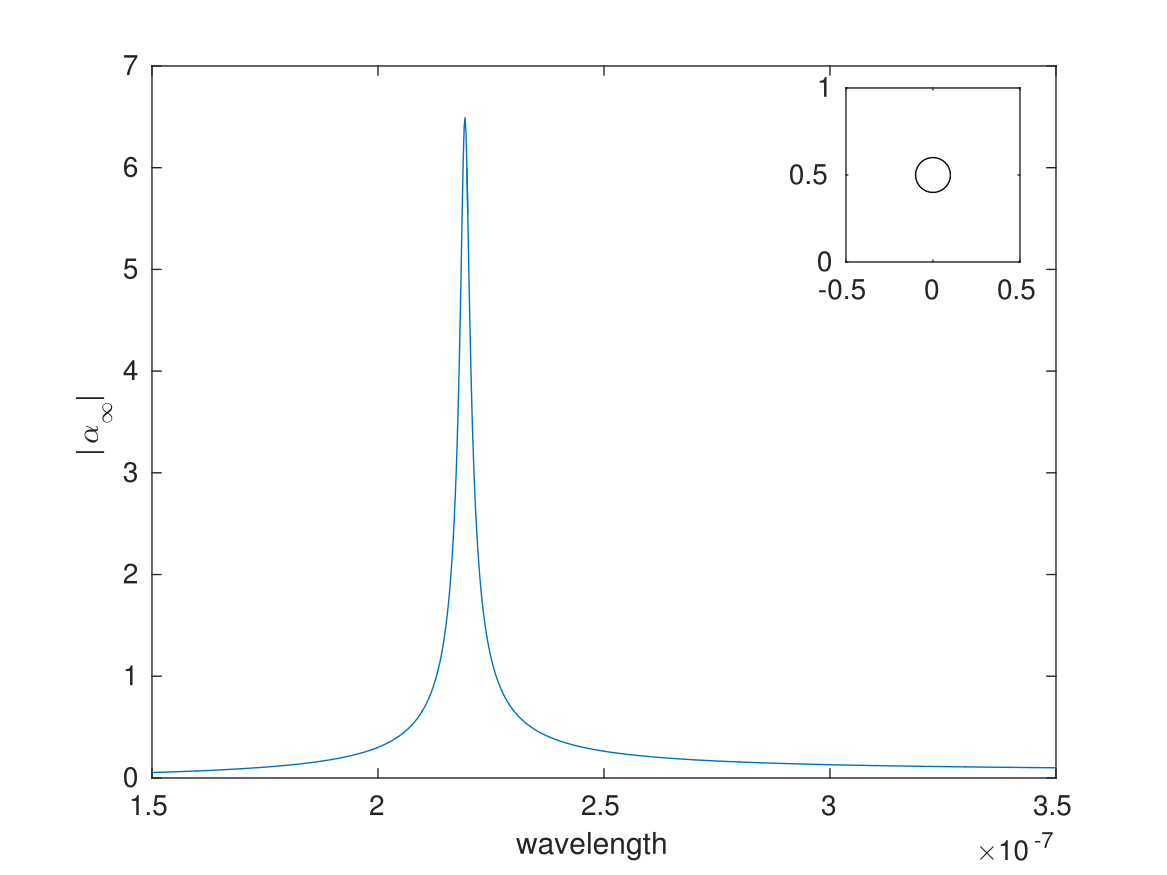}
		\caption{ \label{figCircleSingleFreq meta} Well localized resonance for a disk.}
	\end{center}
\end{figure}

\begin{figure}[h!]
	\begin{center}
		\includegraphics[scale=0.25]{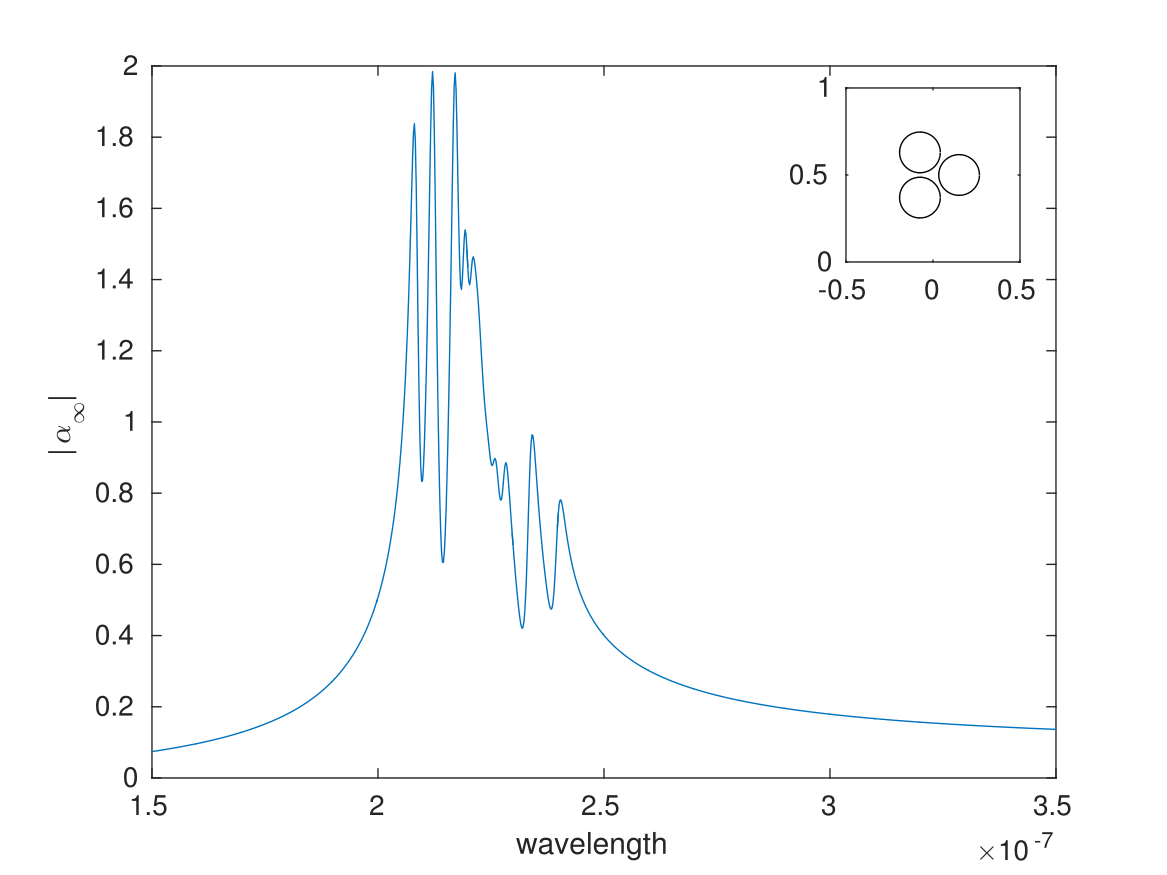}
		\caption{ \label{figThreeCircleSingleFreq meta} Delocalized resonances for three well-separated disks.}
	\end{center}
\end{figure}

\end{document}